\renewcommand{\baselinestretch}{1.2}
\newtheorem{prethm}{{\bf Theorem}}
\newenvironment{thm}{\begin{prethm}{\hspace{-0.5
               em}{\bf.}}}{\end{prethm}}
\newtheorem{prepro}[prethm]{Proposition}
\newtheorem{prelem}[prethm]{Lemma}
\newtheorem{precor}[prethm]{Corollary}
\newtheorem{preremark}{{\bf Remark}}
\newenvironment{rem}{\begin{preremark}\em{\hspace{-0.5
              em}{\bf.}}}{\end{preremark}}
\newtheorem{preexample}{{\bf Example}}
\newtheorem{preproblem}{{\bf problem}}
\newtheorem{preproof}{{\bf Proof.}}
\newenvironment{proof}[1]{\begin{preproof}{\rm
               #1}\hfill{$\Box$}}{\end{preproof}}
\renewcommand{\thefootnote}
\begin{document}

\date{}
\title{A note on comaximal graph of non-commutative rings}
\author{S. Akbari$^{\,\rm a}$,~ M. Habibi$^{\,\rm b}$,~ A. Majidinya$^{\,\rm b}$,~ R. Manaviyat$^{\,\rm b}$,
\\
{\footnotesize {\em $^{\rm a}$Department of Mathematical Sciences, Sharif University of Technology,}}\\
{\footnotesize {\em Tehran, Iran}}\\
{\footnotesize{\em $^{\rm b}$Department of Pure Mathematics, Faculty of Mathematical Sciences,}}\\
{\footnotesize {\em Tarbiat Modares University, Tehran, Iran}}}
\footnotetext{E-mail Addresses: {\tt s\_akbari@sharif.edu}, {\tt
mhabibi@modares.ac.ir}, {\tt a.majidinya@modares.ac.ir}, {\tt
r.manaviyat@modares.ac.ir}}
\date{}
\maketitle
\begin{quote}
{\small \hfill{\rule{13.3cm}{.1mm}\hskip2cm} \textbf{Abstract}\\
 Let $R$ be a ring with unity. The graph $\Gamma(R)$ is a graph with vertices as elements of $R$, where two distinct
 vertices $a$ and $b$ are adjacent if and only if $Ra+Rb=R$. Let $\Gamma_2(R)$ is the subgraph of $\Gamma(R)$ induced
 by the non-unit elements. H.R. Maimani et al. [H.R. Maimani et al., Comaximal graph of commutative rings, J. Algebra $319$
 $(2008)$ $1801$-$1808$] proved that: ``If $R$ is a commutative ring with unity and the graph
 $\Gamma_2(R)\backslash J(R)$ is $n$-partite, then the number of maximal ideals of $R$ is at most $n$."
 The proof of this result is not correct. In this paper we present
a correct proof for this result. Also we generalize some results
given in the aforementioned paper for the non-commutative rings.
\vspace{1mm} {\renewcommand{\baselinestretch}{1}
\parskip = 0 mm

\noindent{\small {\it Keywords}: Comaximal graph; complete $n$-partite graph }}}

\vspace{-3mm}\hfill{\rule{13.3cm}{.1mm}\hskip2cm}
\end{quote}

\section{Introduction}
{Throughout this paper $R$ denotes an associative ring with unity and $J(R)$
denotes the Jacobson radical of $R$. We also denote $M_n(R)$ and $\mathrm{Max}(R)$, for
the set of $n\times n$ matrices over the ring $R$ and the set of all maximal left ideals of $R$, respectively.
In this paper $\mathbb{F}_q$ denotes the field with $q$ elements. An $n$-$partite$ graph is one whose vertex set
can be partitioned into $n$ subsets so that no edge has both ends in any one subset. A $complete$ $n$-$partite$
graph is one in which each vertex is joined to every vertex that is not in the same subset. A $clique$ of a graph
$G$, is a complete subgraph of $G$.
In $\cite{sha}$, Sharma and Bhatwadekar defined  $comaximal$ $graph$, $\Gamma(R)$, with vertices as elements of
$R$, where two distinct vertices $a$ and $b$ are adjacent if and only if
$Ra+Rb=R$. The Comaximal graph of rings has been studied by several authors, see $\cite{yas,sha,wan1,wan2}$.
The subgraph of $\Gamma(R)$ induced on unit elements of $R$ and the subgraph of $\Gamma(R)$ induced on non-unit
elements of $R$ denoted by $\Gamma_1(R)$ and $\Gamma_2(R)$, respectively. In $\cite{yas}$, some properties of
$\Gamma_2(R)\backslash J(R)$, for a commutative ring $R$ have been studied. The following theorem was proved in
$\cite{yas}$.
\begin{thm}
 Let $R$ be a commutative ring with unity and $n>1$. Then the following hold:
\begin{enumerate}
\item [\rm{(a)}] If $|\mathrm{Max}(R)|=n<\infty$, then the graph $\Gamma_2(R)\backslash J(R)$ is $n$-partite.

\item [\rm{(b)}] If the graph  $\Gamma_2(R)\backslash J(R)$ is $n$-partite, then $|\mathrm{Max}(R)|\leq n$.
In this case if the graph $\Gamma_2(R)\backslash J(R)$ is not $(n-1)$-partite, then $\mathrm{Max}(R)=n$.\\
\end{enumerate}
\end{thm}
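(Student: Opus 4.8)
The plan is to prove (a) by an explicit $n$-colouring of $\Gamma_2(R)\backslash J(R)$, and to prove (b) by producing, whenever $R$ has at least $n+1$ maximal ideals, a clique on $n+1$ vertices inside $\Gamma_2(R)\backslash J(R)$; since such a clique cannot be a subgraph of an $n$-partite graph (a clique meets each part in at most one vertex), this gives the contrapositive of the first assertion of (b). For (a), write $\mathrm{Max}(R)=\{\mathfrak m_1,\dots,\mathfrak m_n\}$. Every vertex $a$ of $\Gamma_2(R)\backslash J(R)$ is a non-unit, so $a\in\mathfrak m_i$ for some $i$; colour $a$ with the least such index. If $a$ and $b$ receive the same colour $i$ then $Ra+Rb\subseteq\mathfrak m_i\subsetneq R$, so they are non-adjacent, and the colour classes give an $n$-partition; deleting $J(R)$ only discards vertices, so it does no harm.

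For the first assertion of (b) I argue the contrapositive. Suppose $\mathrm{Max}(R)$ has at least $n+1$ elements and fix distinct maximal ideals $\mathfrak p_1,\dots,\mathfrak p_{n+1}$. I would construct $a_1,\dots,a_{n+1}$ recursively so that at each stage $\ell$: (i) $a_\ell\in\mathfrak p_\ell$; (ii) $a_\ell\notin\mathfrak p_i$ for all $i\in\{1,\dots,n+1\}$ with $i\ne\ell$; and (iii) $Ra_\ell+R(a_1\cdots a_{\ell-1})=R$, reading the empty product as $1$. At stage $\ell$, set $c_\ell=a_1\cdots a_{\ell-1}$ and $B_\ell=\big(\prod_{i\ne\ell}\mathfrak p_i\big)\,Rc_\ell$. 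Each $\mathfrak p_i$ with $i\ne\ell$ is comaximal with the maximal ideal $\mathfrak p_\ell$, and $Rc_\ell$ is comaximal with $\mathfrak p_\ell$ because no $a_m$ with $m<\ell$ lies in the prime $\mathfrak p_\ell$ by (ii), hence $c_\ell\notin\mathfrak p_\ell$; a finite product of ideals each comaximal with $\mathfrak p_\ell$ is again comaximal with $\mathfrak p_\ell$, so $B_\ell+\mathfrak p_\ell=R$. Choose $a_\ell\in\mathfrak p_\ell$ and $d_\ell\in B_\ell$ with $a_\ell+d_\ell=1$. As $d_\ell$ lies in every $\mathfrak p_i$ with $i\ne\ell$ and in $Rc_\ell$, we get $a_\ell\equiv 1\pmod{\mathfrak p_i}$ for $i\ne\ell$, which is (ii), and $1\in Ra_\ell+Rc_\ell$, which is (iii); (i) is immediate. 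Moreover $a_\ell$ is a non-unit since $a_\ell\in\mathfrak p_\ell$, and $a_\ell\notin J(R)$ because $a_\ell\notin\mathfrak p_i$ for $i\ne\ell$ while $J(R)\subseteq\mathfrak p_i$, so $a_\ell$ is genuinely a vertex of $\Gamma_2(R)\backslash J(R)$. Finally, for $m<\ell$ we have $Rc_\ell\subseteq Ra_m$, so (iii) gives $Ra_\ell+Ra_m=R$; thus $\{a_1,\dots,a_{n+1}\}$ is the required clique.

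The second assertion of (b) follows formally. Assume $\Gamma_2(R)\backslash J(R)$ is $n$-partite but not $(n-1)$-partite. By the first assertion $|\mathrm{Max}(R)|\le n$. If $|\mathrm{Max}(R)|=m\le n-1$, then for $m\ge 2$ part (a) makes $\Gamma_2(R)\backslash J(R)$ $m$-partite, hence $(n-1)$-partite, and for $m=1$ the ring is local, so $\Gamma_2(R)\backslash J(R)$ has no edges and is again $(n-1)$-partite; either way this contradicts the hypothesis, so $|\mathrm{Max}(R)|=n$.

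The step I expect to be the main obstacle — and, I suspect, where the original argument is flawed — is the clique construction. The naive approach is to choose, by prime avoidance, an element $x_i\in\mathfrak m_i\setminus\bigcup_{j\ne i}\mathfrak m_j$ for each $i$; but two such elements may still lie together in a maximal ideal outside the chosen finite list, and when $\mathrm{Max}(R)$ is infinite there is no way to enlarge the list to absorb it. Demanding instead that $a_\ell\equiv 1\pmod{(a_1\cdots a_{\ell-1})}$ forces comaximality of $a_\ell$ with every earlier $a_m$ outright, with no need to control how many maximal ideals contain the $a_m$; this is what makes the construction go through for arbitrary $R$, whether $\mathrm{Max}(R)$ is finite or not.
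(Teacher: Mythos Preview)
Your proof is correct. For part~(a) the paper offers no new argument (it merely quotes the result), so your standard colouring by least containing maximal ideal is exactly what is intended. For part~(b), the paper's fix is its Theorem~2: arguing by induction on $n$, it invokes Prime Avoidance to choose an auxiliary $y\in(m_1\cap m_n)\setminus\bigcup_{j=2}^{n-1}m_j$, applies the inductive hypothesis to $x_1y$ over $m_1,\dots,m_{n-1}$ to obtain $x_2,\dots,x_{n-1}$, and finally appends $x_n\in m_n$ adjacent to the product $x_1x_2\cdots x_{n-1}$. Your route is a direct recursion that forces $a_\ell\equiv 1\pmod{a_1\cdots a_{\ell-1}}$ by exploiting comaximality of the product ideal $\bigl(\prod_{i\ne\ell}\mathfrak p_i\bigr)Rc_\ell$ with $\mathfrak p_\ell$; this dispenses with both Prime Avoidance and the auxiliary element $y$, giving a cleaner one-pass construction. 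Both arguments ultimately rest on the same observation, namely that $R(b_1\cdots b_k)\subseteq Rb_i$, so an element comaximal with a product is comaximal with each factor. Your closing paragraph also correctly diagnoses the flaw in the original proof --- precisely the point the paper illustrates with its $\prod_{i=1}^{\infty}\mathbb{Z}_2$ counterexample.
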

In the proof of the previous theorem, Part $(b)$, the authors by contradiction assume that $|\mathrm{Max}(R)|>n$
and then they claim that if $m_1,\ldots,m_{n+1}\in \mathrm{Max}(R)$
and $x_i\in m_i\backslash\bigcup\limits_{j\ne i}{m_j}$, then $\{x_1,\ldots,x_{n+1}\}$ is a clique.
This claim is not true because consider the ring $R=\prod\limits_{i=1}^\infty{\mathbb{Z}_2}$. Let $m_1$ and $m_2$
be the set of all elements of $R$ whose the first and second entries are zero, respectively. Then
$e_1=(1,0,0,\ldots)\in m_2\backslash m_1$ and $e_2=(0,1,0,\ldots)\in m_1\backslash m_2$, but $e_1$ and $e_2$
are not adjacent.

In the following, we provide a correct proof for this part.

\begin{thm}
Let $R$ be a commutative ring with unity and $|\mathrm{Max}(R)|\geq 2$. If $|\mathrm{Max}(R)|\geq n$, then
$\Gamma_2(R)\backslash J(R)$ has a clique of order $n$.
\end{thm}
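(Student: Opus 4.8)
The plan is to pick $n$ distinct maximal ideals $m_1,\dots,m_n$ (available since $|\mathrm{Max}(R)|\ge n$) and to construct non-units $x_1,\dots,x_n$ with $x_i\in m_i$ that are pairwise comaximal, building them \emph{one at a time} rather than all at once. The direct attempt $x_i\in m_i\setminus\bigcup_{j\ne i}m_j$ fails exactly as the authors explain: a \emph{third} maximal ideal---possibly not among $m_1,\dots,m_n$---may contain two of the $x_i$. The recursion below is engineered so that this cannot happen, by forcing each new element to be a unit modulo the ideal generated by all the previously chosen ones.

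So suppose $x_1,\dots,x_k$ have already been chosen, for some $0\le k<n$, so that $x_i\in m_i$, the elements $x_1,\dots,x_k$ are pairwise comaximal, and $x_i\notin m_j$ whenever $1\le i\le k<j\le n$. Set
\[
 D_k:=(x_1x_2\cdots x_k)\cap m_{k+2}\cap m_{k+3}\cap\cdots\cap m_n,
\]
reading an empty product as $R$ and an empty intersection as $R$. The first thing to check is that $m_{k+1}+D_k=R$: the hypothesis gives $x_i\notin m_{k+1}$ for $i\le k$, so $m_{k+1}$ is comaximal with each $(x_i)$ and hence with the product $(x_1\cdots x_k)$; it is also comaximal with each $m_j$ for $j\ge k+2$, hence with $m_{k+2}\cap\cdots\cap m_n$; and if an ideal is comaximal with $A$ and with $B$, then it is comaximal with $A\cap B$, since $AB\subseteq A\cap B$. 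Write $1=x_{k+1}+d$ with $x_{k+1}\in m_{k+1}$ and $d\in D_k$, and take this $x_{k+1}$ as the next element. It is a non-unit, being in $m_{k+1}$; it is comaximal with each $x_i$ ($i\le k$) because $d\in(x_1\cdots x_k)\subseteq(x_i)$ forces $(x_{k+1})+(x_i)=R$; and $x_{k+1}=1-d\equiv 1\pmod{m_j}$ for every $j\ge k+2$ (as $d\in D_k\subseteq m_j$), so $x_{k+1}\notin m_j$. This is precisely what is needed to re-establish the three conditions with $k$ replaced by $k+1$.

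After $n$ steps we have pairwise comaximal non-units $x_1,\dots,x_n$; they are automatically distinct (if $x_i=x_j$ then $(x_i)+(x_j)=(x_i)\ne R$), so they are $n$ distinct pairwise-adjacent vertices of $\Gamma(R)$ lying in $\Gamma_2(R)$. Finally, since $n\ge2$, for each $i$ choose $j\ne i$; as $x_j$ is a non-unit it lies in some maximal ideal $m$, and comaximality forces $x_i\notin m$, whence $x_i\notin J(R)$. Thus $\{x_1,\dots,x_n\}$ is a clique of order $n$ in $\Gamma_2(R)\backslash J(R)$. (If $n=1$ one simply takes $x_1\in m_1\setminus m_2$ for two distinct maximal ideals.)

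I expect the only real work to be the verification that $m_{k+1}+D_k=R$---that a maximal ideal is comaximal with that particular intersection---together with the bookkeeping that keeps the invariant ``none of $m_{k+1},\dots,m_n$ contains any already-chosen element'' alive through the recursion; this invariant is exactly what guarantees that a fresh maximal ideal is still available for the next step, which is the feature the failed argument was missing. Everything else is routine manipulation of comaximal ideals.
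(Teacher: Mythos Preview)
Your proof is correct, but it takes a genuinely different route from the paper's. The paper argues by induction on $n$, strengthening the claim to: for \emph{every} $x_1\in m_1\setminus\bigcup_{j\ge 2}m_j$ there is a clique $\{x_1,\dots,x_n\}$ with $x_i\in m_i$. For the inductive step it invokes Prime Avoidance to find $y\in(m_1\cap m_n)\setminus\bigcup_{j=2}^{n-1}m_j$, applies the hypothesis to $x_1y$ (relative to $m_1,\dots,m_{n-1}$) to obtain $x_2,\dots,x_{n-1}$, observes that these must avoid $m_n$ because $x_1y\in m_n$ is adjacent to each of them, and then picks $x_n\in m_n$ comaximal with the product $x_1\cdots x_{n-1}$. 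Your argument is instead a single forward recursion that maintains the invariant ``$x_i\notin m_j$ for all chosen $i$ and all not-yet-used $j$'' explicitly, extracting each new $x_{k+1}$ from the comaximality $m_{k+1}+D_k=R$. This dispenses with Prime Avoidance entirely, relying only on the Chinese-Remainder-style fact that an ideal comaximal with $A$ and with $B$ is comaximal with $A\cap B$. The paper's version is a touch slicker and yields the slightly stronger conclusion that the first vertex of the clique may be prescribed; yours is more transparently constructive and more elementary in its ingredients.
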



\begin{proof}
{Let $\{m_1,\ldots,m_n\}\subseteq Max(R)$. First we claim that for every
$x_1\in m_1\backslash \bigcup\limits_{j=2}^{n}{m_j}$, there exists a clique in
$\Gamma_2(R)\backslash J(R)$ with the vertex set $\{x_1,\ldots,x_n\}$, where $x_i\in m_i$, for $i=1,\ldots,n$.
We apply induction on $n$. Clearly, for $n=2$ the assertion is true.
By Prime Avoidance Theorem $[1,$ $p.8]$, $m_1\cap m_n\nsubseteq\bigcup\limits_{j=2}^{n-1}{m_j}$. So
there exists $y\in (m_1\cap m_n)\backslash\bigcup\limits_{j=2}^{n-1}{m_j}$.
Thus $x_1y\in (m_1\cap m_n)\backslash\bigcup\limits_{j=2}^{n-1}{m_j}$. By induction hypothesis there exists a
clique with vertex set $\{x_1y,x_2,\ldots,x_{n-1}\}$,
where $x_i\in m_i\backslash \bigcup\limits_{\scriptstyle j=1\hfill \atop\scriptstyle j\neq i\hfill}^{n-1}{m_j}$,
$2\leq i\leq n-1$. Since $x_1y\in m_n$, thus $x_2,\ldots,x_{n-1}\notin m_n$. Clearly, $x_1$ is adjacent to
$x_2,\ldots,x_{n-1}$. On the other hand $x_1x_2\ldots x_{n-1}\notin m_n$. This implies that there exists $x_n\in m_n$
which is adjacent to $x_1x_2\ldots x_{n-1}$. Thus $\{x_1,\ldots,x_n\}$ is a clique of order $n$ in
$\Gamma_2(R)\backslash J(R)$ and the proof is complete.}
\end{proof}


The next theorem was proved in $\cite{yas}$.

\begin{thm}
Let $R$ be a commutative ring with unity and $|\mathrm{Max}(R)|\geq 2$. Then the following hold:
\begin{enumerate}
\item [\rm{(a)}] If $\Gamma_2(R)\backslash J(R)$ is a complete $n$-partite graph, then $n=2$.

\item [\rm{(b)}] If there exists a vertex of $\Gamma_2(R)\backslash J(R)$ which is adjacent to every vertex,
then $\cong \mathbb{Z}_2\times F$, where $F$ is a field.
\end{enumerate}
\end{thm}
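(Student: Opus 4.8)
\emph{The plan is to prove (a) and (b) by short structural arguments, each reducing to a pigeonhole/idempotent phenomenon.}

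For (a), write $G=\Gamma_2(R)\backslash J(R)$ and suppose $G$ is complete $n$-partite with (nonempty) parts $V_1,\dots ,V_n$. I would first record the elementary fact that the members of any clique of $G$ lie in pairwise distinct maximal ideals, since two comaximal elements cannot both lie in one maximal ideal; together with Theorem 2 (which, as $|\mathrm{Max}(R)|\ge 2$, already yields a clique of order $2$) this forces $n\ge 2$. The real content is to rule out $n\ge 3$. Assume $n\ge 3$ and pick $v_i\in V_i$ for $i=1,2,3$; lying in three distinct parts, they are pairwise adjacent, hence pairwise comaximal. Choose maximal ideals $M_i$ with $v_i\in M_i$; pairwise comaximality forces $M_1,M_2,M_3$ to be distinct and forces $v_i\notin M_j$ whenever $i\ne j$.

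Now comes the key step. Since $M_3$ is prime and neither $M_1$ nor $M_2$ is contained in $M_3$, we get $M_1\cap M_2\not\subseteq M_3$, so we may choose $c\in (M_1\cap M_2)\backslash M_3$. Then $c$ is a non-unit (it lies in $M_1$) outside $J(R)$ (it avoids $M_3$), hence a vertex of $G$; also $c\ne v_1$ because $v_1\notin M_2$ while $c\in M_2$, and $c\ne v_2$ because $v_2\notin M_1$ while $c\in M_1$. Since $Rc+Rv_1\subseteq M_1\ne R$ and $Rc+Rv_2\subseteq M_2\ne R$, the vertex $c$ is adjacent to neither $v_1$ nor $v_2$, so in the complete multipartite graph $G$ it must lie both in the part of $v_1$ and in the part of $v_2$ — impossible, as $V_1\cap V_2=\varnothing$. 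Hence $n\le 2$, so $n=2$.

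For (b), let $v$ be a vertex of $G$ adjacent to every other vertex; thus $v$ is a non-unit, $v\notin J(R)$, and $v$ shares no maximal ideal with any other vertex. Fix a maximal ideal $m$ with $v\in m$. Every $w\in m\backslash J(R)$ is a vertex lying in $m$, hence $w=v$; since $R$ is not local we have $m\ne J(R)$, so $m\backslash J(R)=\{v\}$. For $j\in J(R)$ we get $v+j\in m$ and $v+j\notin J(R)$, so $v+j=v$; therefore $J(R)=0$ and $m=\{0,v\}$. Consequently $v^2\in\{0,v\}$, and $v^2=0$ is impossible (it would make $v$ nilpotent, hence $v\in J(R)=0$), so $v^2=v$ and $v$ is a central idempotent, whence $R\cong Rv\times R(1-v)$. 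Since $v\in m$ forces $Rv\subseteq m=\{0,v\}$ while $v\in Rv$, we get $Rv=\{0,v\}$, a two-element ring with identity $v$, i.e. $Rv\cong\mathbb{Z}_2$; and $R(1-v)\cong R/Rv=R/m$ is a field $F$ because $m$ is maximal. Hence $R\cong\mathbb{Z}_2\times F$.

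I expect the main obstacle to be the third-vertex construction in part (a): the instinctive move is to pick elements lying in exactly one prescribed maximal ideal each, but that needs prime avoidance across \emph{all} maximal ideals and breaks down when there are infinitely many; the argument above sidesteps this by staying inside the graph, taking the three vertices as given and invoking primeness of only one maximal ideal via $M_1\cap M_2\not\subseteq M_3$. In part (b) the one delicate point is to extract $J(R)=0$ before asserting that $m=\{0,v\}$, after which the idempotent decomposition finishes everything.
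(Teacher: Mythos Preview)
Your proof is correct. Note first that the paper does not itself reprove this commutative statement---Theorem~3 is quoted from \cite{yas}---so the natural comparison is with the paper's non-commutative generalizations, Theorems~4 and~5, whose proofs specialize to give Theorem~3.

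For part~(b) your argument is essentially the proof of Theorem~5 restricted to the commutative setting: show $J(R)=0$ by translating $v$ by Jacobson elements, deduce that the maximal ideal through $v$ is $\{0,v\}$, force $v^2=v$, and split $R$ via the idempotent. You finish slightly more directly, reading off $R(1-v)\cong R/m$ as a field instead of invoking Wedderburn--Artin, but the skeleton is the same.

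For part~(a) your route is genuinely different. The paper's Theorem~4 proceeds structurally: from complete-multipartiteness it first deduces $m_i\cap m_j=J(R)$ for every pair of maximal (left) ideals, then shows $R/J(R)$ is Artinian, applies Wedderburn--Artin, and classifies the survivors; in the commutative case only a product $D_1\times D_2$ of fields remains, giving $n=2$. You instead give a direct graph-theoretic obstruction: assuming $n\ge 3$, pick $v_1,v_2,v_3$ in three parts, use primeness of $M_3$ to obtain $c\in(M_1\cap M_2)\setminus M_3$, and observe that $c$ is non-adjacent to both $v_1$ and $v_2$, hence would have to lie in two distinct parts. Your argument is shorter and entirely elementary, but the step $M_1\cap M_2\not\subseteq M_3$ genuinely uses that maximal ideals are prime, so it is commutative-specific; the paper's structural approach is exactly what allows the passage to non-commutative rings and the emergence of the extra $M_2(\mathbb{F}_q)$ case with $n=q+1$.
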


In this paper we will generalize the previous theorem for the
non-commutative rings. Before stating our results, we need the
following remark.


\begin{rem}
Let $R$ be a ring and $|\mathrm{Max}(R)|\geq 2$. Recall that an
element $x\in R$ is left-invertible in $R$ if and only if
$\overline{x}$ is left-invertible in $\overline{R}=\frac{R}{J(R)}$,
see $[2,$ $p.52]$. On the other hand $Rx+Ry=R$ if and only if
$\overline{R}\overline{x}+\overline{R}\overline{y}=\overline{R}$. So
$\Gamma_2(R)\backslash J(R)\cong \Gamma_2(\overline{R})\backslash
\{0\}$.
\end{rem}


\begin{thm}
Let $R$ be a ring and $|\mathrm{Max}(R)|\geq 2$. If
$\Gamma_2(R)\backslash J(R)$ is a complete $n$-partite graph, then $n=2$ or $n=q+1$, where $q$ is a
power of a prime number. Moreover, $\frac{R}{J(R)}\cong M_2(\mathbb{F}_q)$ or $\frac{R}{J(R)}\cong D_1\times D_2$,
where $D_1$ and
 $D_2$ are division rings.
\end{thm}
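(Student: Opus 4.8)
The plan is to pass to $\overline R=R/J(R)$ via Remark 4 and then prove that the parts of the complete $n$-partite graph $\Gamma_2(\overline R)\setminus\{0\}$ are exactly the sets of nonzero elements of the maximal left ideals of $\overline R$; this pins down $\overline R$ as a semisimple ring of left length two, after which Wedderburn--Artin and a count of maximal left ideals finish the job. So assume $J(R)=0$; then $\bigcap\mathrm{Max}(R)=0$ and $R$ is not a division ring (it has at least two maximal left ideals), and by hypothesis $\Gamma_2(R)\setminus\{0\}$ is complete $n$-partite with $n<\infty$ and nonempty parts $V_1,\dots,V_n$, where distinct nonzero non-units $a,b$ lie in one part precisely when $Ra+Rb\ne R$. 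First I would record that $R$ has no left-invertible non-unit: if $yx=1$ with $x$ not invertible, then each power $x^k$ is again a nonzero left-invertible non-unit, the $x^k$ are pairwise distinct (multiplying $x^i=x^j$ on the left by $y^i$ gives $x^{\,j-i}=1$, impossible), and $Rx^k=R$ forces $\{x^k\}$ to be a part by itself, contradicting $n<\infty$. Hence every non-unit generates a proper left ideal, and no proper left ideal contains a unit.

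Next I would check that $L_i:=V_i\cup\{0\}$ is a maximal left ideal. It is a left ideal: for $a,b\in V_i$ and $r\in R$, since $a$ is a non-unit we have $Ra\subsetneq R$, so when $ra\ne0$ it is a non-unit with $R(ra)+Ra=Ra\ne R$, whence $ra\not\sim a$ and $ra\in V_i$; likewise if $a+b\ne0$ it is a non-unit (else $1\in Ra+Rb$) and were it in some $V_j$ with $j\ne i$ it would be adjacent to $a$, i.e.\ $R(a+b)+Ra=R$, contradicting $R(a+b)+Ra\subseteq Ra+Rb\ne R$, so $a+b\in V_i$. As $L_i$ is a proper left ideal it lies in some $\mathfrak m\in\mathrm{Max}(R)$; but $\mathfrak m\setminus\{0\}$ is a set of pairwise non-adjacent non-units, hence lies in a single part, necessarily $V_i$ since $\emptyset\ne V_i\subseteq\mathfrak m$, so $\mathfrak m=L_i$. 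The same squeezing shows conversely that every $\mathfrak m\in\mathrm{Max}(R)$ equals some $L_i$ (being nonzero as $R$ is not a division ring, $\mathfrak m\setminus\{0\}$ lies in a single $V_i$, so $\mathfrak m\subseteq L_i$, hence $\mathfrak m=L_i$). Thus $\mathrm{Max}(R)=\{L_1,\dots,L_n\}$.

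Since the $V_i$ are pairwise disjoint, $L_i\cap L_j=\{0\}$ for $i\ne j$; and $L_1+L_2=R$ because $L_1$ is maximal and $L_2\not\subseteq L_1$. Hence $R=L_1\oplus L_2$ as left $R$-modules with each $L_k\cong R/L_{k'}$ simple, so $R$ is semisimple Artinian of left length two. By Wedderburn--Artin, $R\cong M_2(D)$ or $R\cong D_1\times D_2$ for division rings $D,D_1,D_2$. If $R\cong D_1\times D_2$, its only maximal left ideals are $D_1\times\{0\}$ and $\{0\}\times D_2$, so $n=2$. If $R\cong M_2(D)$, write ${}_RR\cong S\oplus S$ with $S$ the simple left $R$-module; the maximal left ideals are exactly the submodules isomorphic to $S$, which are parametrized by the projective line $\mathbb{P}^1(\mathrm{End}_R(S))$ and number $|D|+1$ when $D$ is finite and infinitely many otherwise. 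Since $n<\infty$, $D$ is finite, hence (a finite division ring being a field) $D=\mathbb F_q$ with $q$ a prime power, and $n=q+1$. This establishes the theorem.

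The step I expect to be the real obstacle is controlling left-invertible non-units: a priori $\overline R$ need not be Dedekind-finite, and without that fact $L_i=V_i\cup\{0\}$ need not be a left ideal and the whole identification of parts with maximal left ideals collapses — the observation that one such element would create infinitely many singleton parts is precisely what lets the finiteness of $n$ carry the argument. The only other point demanding care is the clean count of maximal left ideals of $M_2(D)$ as $|\mathbb{P}^1(D)|=|D|+1$ for finite $D$.
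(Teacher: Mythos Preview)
Your argument is correct. The overall skeleton matches the paper's --- reduce to $\overline R=R/J(R)$, show it is semisimple of small left length, apply Wedderburn--Artin, and count maximal left ideals of $M_2(D)$ --- but the execution differs in two ways worth noting. You build the correspondence from parts to maximal left ideals by proving directly that each $V_i\cup\{0\}$ is a maximal left ideal, which immediately yields $\overline R=L_1\oplus L_2$ with both summands simple; the paper instead works from maximals toward parts, shows that any two distinct maximals meet only in $J(R)$, embeds $\overline R$ into $\prod R/m_i$ to obtain Artinianity, and then rules out $k\ge 3$ and $M_n(D)$ with $n\ge 3$ case by case. More interestingly, you explicitly confront the Dedekind-finiteness obstruction (a left-invertible non-unit would create infinitely many singleton parts), and this is genuinely needed for your argument that $V_i\cup\{0\}$ is closed under left multiplication and addition; the paper never raises this point, and its route can sidestep it because Artinianity of $\overline R$ is obtained first via the embedding, after which Dedekind-finiteness is automatic. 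For the final count the paper exhibits an explicit family $M_\alpha$ indexed by $D$, invokes Wedderburn's little theorem, and cites a lemma for the exact value $q+1$, while you use the $\mathbb P^1(\mathrm{End}_R(S))$ parametrization; both are standard. Your version is a bit more direct and self-contained, at the price of the preliminary Dedekind-finiteness lemma.
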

\begin{proof}{
Let $m_i$ and $m_j$ be two distinct maximal left ideals of $R$.
First note that no two distinct elements of $m_i\backslash J(R)$ are
adjacent. Also every element of $m_i\backslash m_j$ is adjacent to
at least one element of $m_j\backslash m_i$. Since
$\Gamma_2(R)\backslash J(R)$ is a complete $n$-partite graph,
$m_i\backslash m_j$ and $m_j\backslash m_i$ are two subsets of
distinct parts of $\Gamma_2(R)\backslash J(R)$. No element of
$(m_i\backslash J(R))\cap (m_j\backslash J(R))$ is adjacent to
$m_i\backslash m_j$ or $m_j\backslash m_i$. This implies that
$(m_i\backslash J(R))\cap (m_j\backslash J(R))=\varnothing$.
Therefore $m_i\cap m_j=J(R)$. Since $\Gamma_2(R)\backslash J(R)$ is
a complete $n$-partite graph, $R$ has exactly $n$ maximal left
ideals. Suppose that $\mathrm{Max}(R)=\{m_1,\ldots,m_n\}$. Consider
the natural left $R$-module monomorphism
$f:\frac{R}{J(R)}\rightarrow\frac{R}{m_1}\times\cdots\times\frac{R}{m_n}$.
Since $\frac{R}{m_i}$ is an Artinian left $R$-module, ${\rm
im}(f)\cong\frac{R}{J(R)}$ is a left Artinian ring. So by
Wedderburn-Artin Theorem, $[2,$ $p.33]$, $\frac{R}{J(R)}\cong
M_{n_1}(D_1)\times\cdots\times M_{n_k}(D_k)$, where $ D_1,\ldots
,D_k$ are division rings. Every distinct maximal left ideals of
$\frac{R}{J(R)}$ intersect each other trivially. So $k\leq 2$ and
$\frac{R}{J(R)}\cong M_n(D)$ or $\frac{R}{J(R)}\cong D_1\times D_2$.
Suppose that $\frac{R}{J(R)}\cong M_n(D)$ and $n\geq 3$. Let $m_i$
be the maximal left ideal of $M_n(D)$ containing all matrices  whose
$i^{th}$ column is zero, for $i=1,2$. Clearly, $m_1\cap m_2\neq 0$,
a contradiction. So $\frac{R}{J(R)}\cong D_1\times D_2$ or
$\frac{R}{J(R)}\cong M_2(D)$. Let $\frac{R}{J(R)}\cong M_2(D)$ and
$M=\left\{\left[
\begin{array}{cc}
 a & 0 \\
 b & 0 \\
\end{array}
\right]
\mid\,\ a,b\in D\,\right\}$. Clearly, $M$ is a maximal left ideal of $M_2(D)$. For every $\alpha \in D$, let
$$M_{\alpha}=
\left[
\begin{array}{cc}
 1 & \alpha \\
 0 & 1 \\
\end{array}
\right]M\left[
\begin{array}{cc}
 1 & \alpha \\
 0 & 1 \\
\end{array}
\right]^{-1}=\left\{\left[
\begin{array}{cc}
 a+\alpha b & -a\alpha-\alpha b \alpha \\
 b & -b \alpha \\
\end{array}
\right]\mid\,\ a,b\in D\,\right\}$$

Since $\frac{-b\alpha}{b}=-\alpha$, we conclude that for every distinct elements $\alpha ,\alpha'\in D$,
$M_{\alpha}\neq M_{\alpha'}$. Since $M_2(D)$ has finitely many maximal left ideals, so by Wedderburn's ``Little"
Theorem, $[2,$ $p.203]$, $D\cong\mathbb{F}_q$. Now, by $[6,$ Lemma $4.2]$, the number of maximal left ideals of
$M_2(\mathbb{F}_q)$ is $q+1$ and the proof is complete.  }
\end{proof}


\begin{rem}
Let $R=M_2(\mathbb{F}_q)$. Then we show that $\Gamma_2(M_2(\mathbb{F}_q))\backslash \{0\}$ is a complete
$(q+1)$-partite graph. To see this assume that $m_1$ and $m_2$ be two distinct maximal left ideals of $R$.
By $[6,$ Lemma $4.2]$, for every $0\neq a\in m_1$ and $0\neq b\in m_2$, $Ra=m_1$ and $Rb=m_2$.
Thus $\Gamma_2(M_2(\mathbb{F}_q))\backslash \{0\}$ is a complete $(q+1)$-partite graph. Note that $M_{\alpha}=
\left\{\left[
\begin{array}{cc}
 a+\alpha b & -a\alpha-\alpha b \alpha \\
 b & -b \alpha \\
\end{array}
\right]\mid\,\ a,b\in \mathbb{F}_q\,\right\}$, $\alpha\in \mathbb{F}_q$ and $M'=\left\{\left[
\begin{array}{cc}
 0 & a \\
 0 & b \\
 \end{array}
 \right]\mid\,\ a,b\in \mathbb{F}_q\,\right\}
$ are $q+1$ distinct maximal left ideals of $M_2(\mathbb{F}_q)$.
\end{rem}


\begin{thm}
Let $R$ be a ring and $|\mathrm{Max}(R)| \geq 2$. If there exists a
vertex of $\Gamma_2(R)\backslash J(R)$ adjacent to every
other vertices, then $R \cong \mathbb{Z}_2\times D$, where $D$ is a
division ring.
\end{thm}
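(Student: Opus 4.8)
The plan is to reduce immediately to the semisimple case via Remark~2: since $\Gamma_2(R)\backslash J(R)\cong\Gamma_2(\overline R)\backslash\{0\}$ with $\overline R=R/J(R)$, and since a vertex dominating all others in the former corresponds to one in the latter, it suffices to analyze $\overline R$. A dominating vertex is in particular adjacent to some vertex, hence the graph is nonempty and connected-ish enough that $\overline R$ is not a division ring; combined with $|\mathrm{Max}(R)|\ge 2$ we are in a genuinely nontrivial situation. Next I would observe that a dominating vertex $v$ is adjacent to every other non-unit $w$, so in particular $v$ lies in exactly one maximal left ideal (if $v$ were in two distinct maximal left ideals $m_i,m_j$, then $v$ would be non-adjacent to any other nonzero element of $m_i$, and such elements exist because $\dim$ or cardinality of $m_i$ is large — here I would need $m_i\neq\{0,v\}$, i.e. $|m_i|\ge 3$, which fails only in a degenerate small case I would handle separately). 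So $v$ generates a maximal left ideal $\overline R v = m$, and $v$ is the \emph{only} nonzero element of $m$ up to the structure; more precisely every nonzero element of $m$ is also dominating, and by the non-adjacency of elements within one maximal left ideal, $m\setminus\{0\}$ must be a single vertex, forcing $|m|=2$.

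From $|m|=2$ I would extract that $m=\{0,v\}$ is a maximal left ideal with two elements, so $\overline R/m$ — wait, $m$ is only a left ideal, but in the semisimple setting $\overline R\cong M_{n_1}(D_1)\times\cdots\times M_{n_k}(D_k)$ from Wedderburn–Artin (applied exactly as in the proof of Theorem~4, using that $R/m_i$ is Artinian). A minimal-size maximal left ideal of size $2$ forces one of the simple factors to be very small: in $M_{n}(D)$ the smallest maximal left ideal (columns-zero type) has $|D|^{n(n-1)}$... actually the relevant count is the size of a maximal left ideal, and the only way to get a maximal left ideal with just one nonzero element is to have a factor $\mathbb Z_2$ contributing, and all other factors must themselves be trivial in the sense that the complementary ideal is everything. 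Concretely: write $\overline R=A\times B$ where the maximal left ideal $m$ corresponds to $m'\times B$ with $m'$ a maximal left ideal of $A$; then $|m|=|m'|\cdot|B|=2$ forces $|B|=1$ or $|m'|=1$ and the other factor $=2$. Ruling out $|B|=1$ (that would make $\overline R=A$ with a unique... no, we need $k\ge 2$ since there are $\ge 2$ maximal ideals and within $M_n(D)$ with $n\ge 2$ two maximal left ideals intersect nontrivially by the argument in Theorem~4, contradicting that elements inside one maximal left ideal are pairwise non-adjacent). So $|m'|=1$, i.e. $A$ is a division ring with trivial maximal left ideal, and $|B|=2$, i.e. $B\cong\mathbb Z_2$. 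Hence $\overline R\cong\mathbb Z_2\times D$.

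The last step is to upgrade $R/J(R)\cong\mathbb Z_2\times D$ to $R\cong\mathbb Z_2\times D$, i.e. to show $J(R)=0$. This is where I expect the real work to be, though the excerpt's Theorem~3(b) (the commutative prototype) and the phrasing of Theorem~6 suggest the intended conclusion is literally $R\cong\mathbb Z_2\times D$, so $J(R)=0$ must be forced. The idea: the element $v$ dominating $\Gamma_2(R)\backslash J(R)$ has image $\overline v=(1,0)\in\mathbb Z_2\times D$ (the unique nonzero non-unit that is ``everywhere comaximal''); lift an idempotent — $\mathbb Z_2\times D$ has the central idempotent $e=(1,0)$, and idempotents lift modulo $J(R)$, giving a central... one must check centrality lifts, using that $e$ is central mod $J(R)$ and $J(R)$ is nil-ish; then $R\cong eRe\times(1-e)R(1-e)$ with $eRe/J(eRe)\cong\mathbb Z_2$ and $(1-e)R(1-e)/J(\cdot)\cong D$. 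A ring with $R/J(R)\cong\mathbb Z_2$: is $J(R)=0$? Not in general (e.g. $\mathbb Z_4$). So the domination hypothesis must be used again at the level of $R$ itself: any $x\in J(R)$, $x\neq 0$, is a non-unit, is in $\Gamma_2(R)$ but deleted in $\Gamma_2(R)\backslash J(R)$ — so $J(R)$-elements don't directly give vertices. Instead I would argue: if $J(R)\neq 0$ then pick $0\neq x\in J(R)$; then $v$ and $v+x$, or $v$ and some unit-translate, produce in $\Gamma_2(R)$ (before deleting $J(R)$) a configuration contradicting that $v$ is adjacent to \emph{every} vertex of $\Gamma_2(R)$ itself if we read the hypothesis on $\Gamma_2(R)$ rather than on $\Gamma_2(R)\backslash J(R)$ — here I must be careful about exactly which graph the hypothesis is stated for. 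Reading it as stated ($\Gamma_2(R)\backslash J(R)$), the cleanest route is: $\overline R\cong\mathbb Z_2\times D$ has every nonzero element of $\mathbb Z_2\times\{0\}$ and every $(\epsilon,d)$ with $d\neq 0$ a unit except $(1,0)$ and $(0,0)$; so $\Gamma_2(\overline R)\backslash\{0\}$ has a single vertex $(1,0)$, vacuously dominating; pulling back, $\Gamma_2(R)\backslash J(R)$ is a single vertex, which gives no obstruction to $J(R)$ being large — so \textbf{the hypothesis must actually be on $\Gamma_2(R)$}, and then a dominating vertex of $\Gamma_2(R)$ being adjacent to every element of $J(R)\setminus\{0\}$ forces, via $Rv+Rx=R$ with $x\in J(R)$ giving $Rv=R$ (since $Rv+J(R)=R\Rightarrow Rv=R$ by Nakayama), that $J(R)$ has no nonzero elements, i.e. $J(R)=0$. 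That Nakayama step is the crux and the main obstacle: showing $Rv=R$ contradicts $v$ being a non-unit vertex, so in fact $J(R)=\{0\}$ and $R=\overline R\cong\mathbb Z_2\times D$.
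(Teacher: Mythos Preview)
Your proposal contains a genuine gap at exactly the point you yourself flag as ``the crux and the main obstacle'': proving $J(R)=0$. You actually write down the correct idea---look at $v$ and $v+a$ for $0\neq a\in J(R)$---but then abandon it because of a miscomputation. In $\mathbb{Z}_2\times D$ the element $(0,d)$ with $d\neq 0$ is \emph{not} a unit (its first coordinate is $0$), so $\Gamma_2(\mathbb{Z}_2\times D)\setminus\{0\}$ is not a single vertex but a star with center $(1,0)$ and leaves $(0,d)$, $d\neq 0$. Your conclusion that ``the hypothesis must actually be on $\Gamma_2(R)$'' is therefore based on a false premise, and the Nakayama argument you end with proves nothing under the theorem's actual hypothesis.

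The $v+a$ idea you discarded is precisely what works, and it is the paper's opening move: if $0\neq a\in J(R)$, then $v+a$ is a non-unit (because $v$ is a non-unit and $a\in J(R)$) and $v+a\notin J(R)$ (because $v\notin J(R)$), so $v+a$ is a vertex of $\Gamma_2(R)\backslash J(R)$ distinct from $v$. But $Rv+R(v+a)\subseteq Rv+Ra\subseteq m$ for any maximal left ideal $m$ containing $v$, since $a\in J(R)\subseteq m$. Hence $v$ is not adjacent to $v+a$, contradicting the domination hypothesis. So $J(R)=0$ from the start, and there is no need for idempotent lifting or a separate upgrade step. After that the paper proceeds more directly than you do: it shows $Rv=\{0,v\}$ (your $|m|=2$ argument), deduces $v^2=v$ since otherwise $Rv$ is nilpotent hence in $J(R)=0$, and then $R=Rv\oplus R(1-v)$ exhibits $R$ as a sum of two simple left modules, whence Wedderburn--Artin gives $R\cong\mathbb{Z}_2\times D$ immediately. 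Your detour through the product decomposition of maximal left ideals and the case analysis on $|m'|\cdot|B|=2$ is unnecessary once $J(R)=0$ is in hand.
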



\begin{proof}{
 Let $x$ be a vertex of $\Gamma_2(R)\backslash J(R)$ which
is adjacent to all other vertices of $\Gamma_2(R)\backslash J(R)$.
First we claim that $J(R)=0$. If $0\neq a \in J(R)$, then clearly
$x+a$ is a vertex of $\Gamma_2(R)\backslash J(R)$ and
$Rx+R(x+a)\subseteq  Rx+Ra\subseteq m$, for some maximal left ideal
$m$ of $R$. So $x$ and $x+a$ are not adjacent, a contradiction. Thus
$J(R)=0$. Now, we show that $Rx=\{0,x\}$. If $Rx\neq \{0,x\}$, then
there exists an element $r \in R$ such that $0\neq rx\neq x$.
Clearly, $rx\in\Gamma_2(R)\backslash \{0\}$ . But $x$ and $rx$ are
not adjacent, a contradiction. Thus $Rx=\{0,x\}$. For every non-unit
element $y\notin Rx$, since $y$ is adjacent to $x$, $Rx$ is a
maximal left ideal of $R$. If $x^2=0$, then $Rx$ is a nilpotent left
ideal of $R$ and by $[2,$ $p. 53]$, we have $Rx\subseteq
J(R)=\{0\}$, a contradiction. So $x^2=x$ is an idempotent. Now, we
have $R=Rx\oplus R(1-x)$ and so $R(1-x)$ is a simple left
$R$-module. Hence $R$ is a semisimple ring and by Wedderburn-Artin
Theorem, $R\cong M_{n_1}(D_1)\times \cdots\times M_{n_k}(D_k)$,
where $D_1,\ldots ,D_k$ are division rings. Since $R$ is direct sum
of two simple left $R$-modules, $k=2$. Also since $R$ has a maximal
left ideal $m$ with two elements, $R$ has $\mathbb{Z}_2$ as a simple
component. If $n_k\neq1$, then $m$ properly contained in a left
ideal of $R$, a contradiction. Thus $n_k=1$. This implies that
$R\cong \mathbb{Z}_2\times D$, where $D$ is a division ring and the
proof is complete. }
\end{proof}

\noindent {\bf Acknowledgement.} The first author is indebted to the Research Council of Sharif University of Technology for
support.


\end{document}